\newtheorem{theorem}{Theorem}[section]
\newtheorem{lemma}[theorem]{Lemma}
\newtheorem{proposition}[theorem]{Proposition}
\newtheorem{corollary}[theorem]{Corollary}
\newtheorem{definition}[theorem]{Definition}
\begin{document}

% title
\title{Explicit combinatorial design}
% author

\author{Xiongfeng Ma}
\email{xma@tsinghua.edu.cn}
\affiliation{Center for Quantum Information, Institute for Interdisciplinary Information Sciences, Tsinghua University, Beijing, China}
\affiliation{%
Center for Quantum Information and Quantum Control,\\
Department of Physics and Department of Electrical \& Computer Engineering,\\
University of Toronto, Toronto,  Ontario, Canada}

\author{Zhen Zhang}
\email{zzhang12@mails.tsinghua.edu.cn}
\affiliation{Center for Quantum Information, Institute for Interdisciplinary Information Sciences, Tsinghua University, Beijing, China}

\author{Xiaoqing Tan}
\email{ttanxq@jnu.edu.cn}
\affiliation{%
Department of  Mathematics, College of Information Science and Technology, \\
Jinan University, Guangzhou, Guangdong, P.~R.~China}
\affiliation{%
Center for Quantum Information and Quantum Control,\\
Department of Physics and Department of Electrical \& Computer Engineering,\\
University of Toronto, Toronto,  Ontario, Canada}

%%%%%%%%%%%%%%%%%%%%%%%%%%%%%%%%%%%%%%%%%%%%%%%%%%%%%%%%%%%%%%%%%%%%%%%
% Abstract
%%%%%%%%%%%%%%%%%%%%%%%%%%%%%%%%%%%%%%%%%%%%%%%%%%%%%%%%%%%%%%%%%%%%%%%
\begin{abstract}
A combinatorial design is a family of sets that are almost disjoint, which is applied in pseudo random number generations and randomness extractions. The parameter, $\rho$, quantifying the overlap between the sets within the family, is directly related to the length of a random seed needed and the efficiency of an extractor. Nisan and Wigderson proposed an explicit construction of designs in 1994. Later in 2003, Hartman and Raz proved a bound of $\rho\le e^2$ for the Nisan-Wigderson construction in a limited parameter regime. In this work, we prove a tighter bound of $\rho<e$ with the entire parameter range by slightly refining the Nisan-Wigderson construction. Following the block idea used by Raz, Reingold, and Vadhan, we present an explicit weak design with $\rho=1$.
\end{abstract}

\maketitle

\section{Introduction}
Combinatorial designs play an important role in pseudo random number generations \cite{Nisan:Design:1994} and randomness extractions \cite{Trevisan:Extractor:1999}. Nisan and Wigderson propose a simple construction of designs (Nisan-Wigderson design) for pseudo random number generators \cite{Nisan:Design:1994}, which is later applied to construct randomness extractors by Trevisan \cite{Trevisan:Extractor:1999}.% (as Trevisan's extractor).

A combinatorial design is a family of subsets, drawn from the set, which have a same size, $q$, and are almost disjoint. For a family of disjoint subsets, the size of the set, $l$, grows linearly with the number of subsets, $n$. Later, we will see that with a design, the size of the set only grows as $poly(\log{n})$.

One key parameter of a design, $\rho$, is used to quantify the overlap between subsets in the family. Generally speaking, the smaller $\rho$ is, the more disjoint the subsets are. This parameter is linked to the seed length and approximately indicates the ratio of randomness that can be extracted by Trevisan's extractor \cite{Trevisan:Extractor:1999,Raz:Extractor:2002}. In the application of extractors, $\rho$ is normally required to be close to 1. Furthermore, the size of the set, $l$, is linked to the initial randomness input (as seed) required for Trevisan's extractor. In general, the size ($l$) should be small compared to the number of subsets ($n$).

Hartman and Raz proved a bound of $\rho\le e^2$ ($e$ as the Euler's number) for the Nisan-Wigderson design \cite{Hartman:Design:2003} when $n$ is a power of a prime power number, $q$ (subset size). By slightly refining the Nisan-Wigderson design, we prove a better bound $\rho<e$ for the entire range of $n\le q^q$. Furthermore, we follow the block idea used by Raz, Reingold, and Vadhan to construct an explicit design with $\rho=1$ and $l=O(\log^3 n)$.

In Section \ref{Section:Design:Preliminaries}, we review the definitions of combinatorial designs, the Nisan-Wigderson design and the Hartman-Raz bound. In Section \ref{Section:Design:Newbound}, we refine the Nisan-Wigderson design and show a better bound of $\rho$. In Section \ref{Section:Design:Construction}, we construct an explicit $\rho=1$ design. We finally conclude with discussions in Section \ref{Section:Design:Conclusion}.

%In Trevisan's extractor, one first encodes the raw data (with a length of $n$) to a codeword with a relative hamming distance very close (say, $\varepsilon$) to 1/2. Then a random bit is extracted by randomly select a bit from the codeword. The seed length is $l=poly(n,\varepsilon)$. This is called 1-bit extractor, which is proven to be a strong extractor \cite{Trevisan:Extractor:1999}. In order to extract $m$ bit random bits, one needs $m$ $l$-bit strings. These strings are subsets of a random seed (with a length of $d$). In order to make this seed short and the intersection between these $l$-bit strings should be small. In fact, these strings form a design.
%
%side remark: can't use the seed repeatedly, otherwise $\varepsilon$ grows too fast, according to the composable definition.
%
%from 1-bit extractor to Trevisan's extractor.  why designs are important.
%
%review Raz's work...
%
%One example of weak design is given in Section 3 of \cite{Raz:Extractor:2002}. Here we introduce a simple one given by Nisan-Wigderson \cite{Nisan:Design:1994}, which is used in the original Trevisan's 99 \cite{Trevisan:Extractor:1999}.

\section{Preliminaries} \label{Section:Design:Preliminaries}

\subsection{Notations and Definitions} \label{Sub:Design:Def}
Notations: $[l]=\{0,1,2,\dots,l-1\}$; $\log$ is base 2; $\ln$ is the natural logarithm; and $e$ is the base of the natural logarithm or the Euler's number.

Define a Galois (finite) field, $GF(q)=[q]$ where $q=p^r$, $r$ is a positive integer, and $p$ is a prime. Here, we represent an element, $j\in[q]$, by a $p$-nary string. Define $\mathcal{F}_q$ to be the ring of polynomials over the field $GF(q)$. For a polynomial $\phi(x)\in\mathcal{F}_q$, denote $\lambda(\phi)$ to be its number of roots over $GF(q)$. For the sake of simplicity, we use $p=2$ in the following. We remark that our results apply to the case of a general prime $p$ with minor modifications.

Denote $\mathcal{M}_{q^{d+1}}=\{\phi_0,\phi_1,\dots,\phi_{q^{d+1}-1}\}\subseteq \mathcal{F}_q$ to be the set of all polynomials over $GF(q)$ with the highest order no greater than $d\in[q]$, and hence, $|\mathcal{M}_{q^{d+1}}| = q^{d+1}$. We further divide the set $\mathcal{M}_{q^{d+1}}$ evenly into $q$ disjoint subsets, $\mathcal{N}_{d,j}$ with $j\in GF(q)$,
\begin{equation} \label{Design:Define:Ndj}
\begin{aligned}
\mathcal{N}_{d,j} \triangleq \{ j x^d+\phi(x) | \phi(x)\in \mathcal{M}_{q^{d}}\}.
\end{aligned}
\end{equation}
That is, the coefficient of $x^d$ of each polynomial in $\mathcal{N}_{d,j}$ is $j$. It is not hard to see that
\begin{equation} \label{Design:PM0i}
\begin{aligned}
\mathcal{M}_{q^{d+1}} &= \bigcup_{j=0}^{q-1}\mathcal{N}_{d,j}, \\
\mathcal{N}_{d,0} &= \mathcal{M}_{q^{d}} \\
\end{aligned}
\end{equation}
and hence for every $j\in[q]$,
\begin{equation} \label{Design:Ndj}
\begin{aligned}
|\mathcal{N}_{d,j}| &= q^d. \\
%|\mathcal{N}_{0,j}| &= 1. \\
\end{aligned}
\end{equation}

For a polynomial set, $\mathcal{M}$, define a function,
\begin{equation} \label{Design:Define:Lambda}
\begin{aligned}
\Lambda(\mathcal{M}) \triangleq \sum_{\phi\in\mathcal{M}}2^{\lambda(\phi)}
%= \sum_{j=0}^{n-1}2^{\lambda(\phi_{j})}
\end{aligned}
\end{equation}
In the summation on the right side, we assume that the number of roots of the trivial polynomial $\phi=0$ is zero. That is, for every constant function $\phi$,
\begin{equation} \label{Design:Define:phiconstfun}
\begin{aligned}
\lambda(\phi\equiv \text{const})=0.
\end{aligned}
\end{equation}

\subsection{Designs}
A combinatorial design is a family (collection) of nearly disjoint subsets of a set $[l]$. Here are the two definitions of designs used in the literature.

\begin{definition} \label{Def:Ext:Design}
(Standard Design) A family of sets $S_0, S_1,\dots,S_{n-1}\subseteq[l]$ is a standard $(n,q,l,\rho)$-design if
\begin{enumerate}
\item
For all $i \in[n]$, $|S_i|=q$.

\item
For all $i\neq j \in[n]$,
\begin{equation} \label{Design:StandardDef}
\begin{aligned}
|S_i\cap S_j|\le\log\rho.
\end{aligned}
\end{equation}
\end{enumerate}
\end{definition}

\begin{definition} \label{Def:Ext:WeakDesign}
(Weak design) A family of sets $S_0, S_1,\dots,S_{n-1}\subseteq[l]$ is a weak $(n,q,l,\rho)$-design if
\begin{enumerate}
\item
For all $i \in[n]$, $|S_i|=q$.

\item
For all $i \in[n]$,
\begin{equation} \label{Design:WeakDef}
\begin{aligned}
\sum_{j<i}2^{|S_i\cap S_j|} \le n\rho.
\end{aligned}
\end{equation}
\end{enumerate}
\end{definition}

We remark that in this work, we use a slightly stronger version of Eq.~\eqref{Design:WeakDef},
\begin{equation} \label{Design:WeakDefi}
\begin{aligned}
\sum_{j<i}2^{|S_i\cap S_j|} \le (i+1)\rho.
\end{aligned}
\end{equation}
Here, $(i+1)\rho\le n\rho$, since $i\in[n]$.

%\begin{definition} \label{Def:Ext:ModifiedWeakDesign}
%(Modified weak design) A family of sets $S_0, S_1,\dots,S_{n-1}\subseteq[l]$ is a modified weak $(n,q,l,\rho)$-design if
%\begin{enumerate}
%\item
%For all $i \in[n]$, $|S_i|=q$.
%
%\item
%For all $i \in[n]$,
%\begin{equation} \label{Design:ModWeakDef}
%\begin{aligned}
%\sum_{j\neq i}2^{|S_i\cap S_j|} \le n \rho.
%\end{aligned}
%\end{equation}
%\end{enumerate}
%\end{definition}

Definition \ref{Def:Ext:Design} is originally used in the Nisan-Wigderson construction \cite{Nisan:Design:1994} that is applied in the Trevisan extractor \cite{Trevisan:Extractor:1999}. Then, Raz et al.~showed that a weaker version of design (Definition \ref{Def:Ext:WeakDesign}) is sufficient for the use in the Trevisan extractor \cite{Raz:Extractor:2002} \footnote{In the original definition of weak design, $(n-1)\rho$ instead of $n\rho$ is used on the right side of Eq.~\eqref{Design:WeakDef}. Here we follow the definition in \cite{Hartman:Design:2003}.}. Later, Hartman and Raz proved a bound of $\rho$ of the Nisan-Wigderson construction for a modified version of the weak design \cite{Hartman:Design:2003}.

A design can be treated as an $l \times n$ binary (or $p$-nary) matrix with the $i$th row represents a subset $S_{i-1}$, for example, $n=4$, $q=2$, $l=4$ and a binary matrix
\begin{equation} \label{Ext:design:44example}
\begin{aligned}
A = \left(
  \begin{array}{cccc}
    1 & 0 & 1 & 0 \\
    0 & 1 & 0 & 1 \\
    1 & 0 & 0 & 1 \\
    0 & 1 & 1 & 0 \\
  \end{array}
\right).
\end{aligned}
\end{equation}
Take $[l]=\{0,1,2,3\}$, then the family of sets are $S_1=\{0,2\}$, $S_2=\{1,3\}$, $S_3=\{0,3\}$ and $S_4=\{1,2\}$. It is not hard to see that $\rho=2$ for the standard design from Eq.~\eqref{Design:StandardDef}, while $\rho=5/4$ in the weak design definition of Eq.~\eqref{Design:WeakDef}.

As pointed earlier, the objective of design construction is to minimize $l$ and $\rho$, given $q$ and $n$. In this work, we will derive a tight upper bound of $\rho$ for (weak) designs from the modified Nisan-Wigderson construction.

\subsection{Nisan-Wigderson design}
Without loss of generality, let the size of set (the length of the random seed in the application of Trevisan's extractor), $l$, be the square of a prime power number ($l=q^2$, if not, pick the smallest power of 2 which is greater than $\sqrt{l}$).
%Without loss of generality, let $d=l^2$, where $l$ is a prime power (if not, pick the smallest power of 2 which is greater than $l$.).
Consider $[l]$ to be a $q\times q$ 2-dimensional array, then every element of $[l]$ can be represented as a pair of elements in $GF(q)$. The Nisan-Wigderson design is constructed as follows.
\begin{enumerate}
\item
Find $n$ distinct polynomials $\{\phi_0(\cdot),\phi_1(\cdot),\dots,\phi_{n-1}(\cdot)\}$ on $GF(q)$ of degree at most $d$. This can be done as long as $n\le q^{d+1}$ and $d\in[q]$.

\item
The nearly disjoint sets are given by
\begin{equation} \label{Design:NWcon}
\begin{aligned}
S_i  = \{ <j,\phi_i(j)>|j\in GF(q)\}
\end{aligned}
\end{equation}
where $<j,\phi_i(j)>$ presents an element in $[l]$.
\end{enumerate}
%explain clearly why the design can be represented in this way...
%One can think of the row vector in the binary matrix is divided into blocks with a size of $q$. Since the number of columns is $l=q^2$, the number of such blocks is also $q$. Each block, there is only one non-zero element.
The following facts can be easily verified \cite{Nisan:Design:1994}:
\begin{enumerate}
\item
The size of each set is exactly $q$, $|S_i|=q$ for every $i\in[q]$.

\item
Any two sets intersect in at most $d$ points. %Each polynomial on $GF(q)$ of degree at most $d$ is determined by $d+1$ coefficients.

\item
There are at least $q^{d+1}$ possible sets (the number of polynomials on $GF(q)$ of degree at most $d$).
\end{enumerate}

In the original proposal of the Nisan-Wigderson design, the polynomials (with a degree at most $d$) are chosen in an arbitrary manner. A natural way to choose these polynomials is to go from low order polynomials to higher ones, which results the highest order of polynomials to be $d=\lceil\log{n}/\log{q}-1\rceil\le\log{n}$. According to Definition \ref{Def:Ext:Design}, it is straightforward to see that $\rho\le\log{n}$ as shown by Nisan and Wigderson \cite{Nisan:Design:1994}.

\subsection{Hartman-Raz bound}
Hartman and Raz proved that the Nisan-Wigderson design is an explicit modified weak $(n,q,l,\rho)$-design with $l=q^2$ and $\rho\le e^2$ in Theorem 1 of ref.~\cite{Hartman:Design:2003}. We remark that Hartman and Raz's result is only proven to for the case when $n$ is a power of $q$.

\section{New bound} \label{Section:Design:Newbound}
Intuitively, the more sets the design has, the harder to make sets disjoint. Thus, one might conjecture that the parameter $\rho$, defined in Eq.~\eqref{Design:WeakDef}, grows with $n$. Mathematically, this is not necessarily true, because the overlap is normalized by $n$, as shown in Eq.~\eqref{Design:WeakDef}. In fact, one can find counter examples to this conjecture for Nisan-Wigderson design. In the following, we present a new design construction by slightly refining the original Nisan-Wigderson design. We show that for any $n\le q^q$, one can obtain the upper bound $\rho<(1+q^{-1})^q$, which shows that the refined Nisan-Wigderson design is an explicit weak $(n,q,l,\rho)$-design with $\rho<e$ (see, Theorem \ref{Theorem:Design:bounde}).

\subsection{Refined Nisan-Wigderson design}
Here, we refine the Nisan-Wigderson design by choosing the $i$th polynomial for Eq.~\eqref{Design:NWcon} in the following manner:
\begin{equation} \label{Design:PolyChoose}
\begin{aligned}
\phi_i(x) %&= \mod{(\lfloor i/q^d\rfloor,q)}x^d+\mod{(\lfloor i/q^{d-1}\rfloor,q)}x^{d-1}+\dots+\mod{(\lfloor i/q\rfloor,q)}x^1+\mod{(i,q)} x^0 \\
&= \sum_{k=0}^{d} {(\lfloor i/q^k\rfloor \mod q)} x^k, \\
\end{aligned}
\end{equation}
where $i\in[n]$, $d=\lceil\log{n}/\log{q}-1\rceil$ (then, $q^{d}<n\le q^{d+1}$), and the coefficients calculated by the modulo function ${(\lfloor i/q^k\rfloor \mod q)}$ are elements of $GF(q)$. These polynomials form a set \begin{equation} \label{Design:Define:DefMn}
\begin{aligned}
\mathcal{M}_{n}=\{\phi_0,\phi_1,\dots,\phi_{n-1}\},
\end{aligned}
\end{equation}
and by the definition of Eq.~\eqref{Design:Define:Ndj},
\begin{equation} \label{Design:Define:NncontainMd0}
\begin{aligned}
\mathcal{N}_{d,0} = \{\phi_0,\phi_1,\dots,\phi_{q^d-1}\} \subset \mathcal{M}_{n}.
\end{aligned}
\end{equation}
Each polynomial, $\phi_i$, in $\mathcal{M}_{n}$ corresponds to a set $S_i$ in the design in the form of Eq.~\eqref{Design:NWcon}.

\subsection{Evaluation of $\rho$}
In the following discussion, we evaluate the parameter $\rho$ in Eq.~\eqref{Design:WeakDefi} for the design given by Eq.~\eqref{Design:PolyChoose}.
%The number of intersection elements $|S_i\cap S_j|$ equals to the number of roots of $\phi_i=\phi_j$ or $\lambda(\phi_i-\phi_j)$ and hence the left hand side of Eq.~\eqref{Design:WeakDef} is given by
%\begin{equation} \label{Design:Modified:LambdaInter}
%\begin{aligned}
%\sum_{j=0}^{i-1}2^{|S_i\cap S_j|} =
%\end{aligned}
%\end{equation}
The number of intersection elements $|S_i\cap S_j|$ equals to the number of roots of $\phi_i=\phi_j$ or
\begin{equation} \label{Design:Modified:IntersectionRoot}
\begin{aligned}
|S_i\cap S_j| = \lambda(\phi_i-\phi_j).
\end{aligned}
\end{equation}
%The number of intersection elements $|S_i\cap S_j|$ equals to the number of roots of $\phi_i=\phi_j$ or $\lambda(\phi_i-\phi_j)$.
Then, the left hand side of Eq.~\eqref{Design:WeakDefi} can be written as
\begin{equation} \label{Design:Modified:suminter}
\begin{aligned}
\sum_{j<i}2^{|S_i\cap S_j|} = \sum_{j<i}2^{\lambda(\phi_i-\phi_j)}.
\end{aligned}
\end{equation}

\begin{proposition}
For any two sets defined in Eq.~\eqref{Design:Define:Ndj}, $\mathcal{N}_{d,i}$ and $\mathcal{N}_{d,j}$ with $ij\neq0$ and $i,j\in GF(q)$, there exists a one-to-one map between them such that the two polynomials by the map have the same roots.
\end{proposition}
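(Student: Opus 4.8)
The plan is to take the map to be multiplication by a fixed nonzero constant of the field. Because the hypothesis $ij\neq 0$ forces both $i\neq 0$ and $j\neq 0$ in $GF(q)$, the inverse $i^{-1}$ exists; I would set $c=j\,i^{-1}\in GF(q)$ and define $f:\mathcal{N}_{d,i}\to\mathcal{N}_{d,j}$ by $f(\psi)=c\,\psi$.

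First I would check that $f$ really lands in $\mathcal{N}_{d,j}$. By Eq.~\eqref{Design:Define:Ndj} any $\psi\in\mathcal{N}_{d,i}$ has the form $\psi=i x^d+\phi(x)$ with $\phi\in\mathcal{M}_{q^d}$, so
\begin{equation}
f(\psi)=c\,\psi=(j\,i^{-1})\,i\,x^d+c\,\phi(x)=j\,x^d+c\,\phi(x).
\end{equation}
Since $c\,\phi$ still has degree at most $d-1$ it again lies in $\mathcal{M}_{q^d}$, and therefore $f(\psi)\in\mathcal{N}_{d,j}$ by the same defining equation.

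Next I would establish that $f$ is a bijection. Using $j\neq 0$ in exactly the same way, the map $g(\chi)=c^{-1}\chi$ sends $\mathcal{N}_{d,j}$ back into $\mathcal{N}_{d,i}$, and $g\circ f$ and $f\circ g$ are both the identity because $c^{-1}c=1$. Hence $f$ is the desired one-to-one correspondence. Finally I would verify the root condition: for every $a\in GF(q)$ we have $f(\psi)(a)=c\,\psi(a)$, and since $c\neq 0$ this vanishes if and only if $\psi(a)=0$. Thus $\psi$ and $f(\psi)$ share exactly the same set of roots over $GF(q)$, and in particular $\lambda(\psi)=\lambda(f(\psi))$, which is precisely the claim.

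I do not anticipate any genuine obstacle here: the construction is essentially forced once one observes that scaling by a nonzero element of $GF(q)$ fixes the zero set while shifting the leading coefficient from $i$ to $j$. The only point worth a brief remark is the boundary behaviour, but it causes no difficulty — multiplication by the nonzero constant $c$ carries non-constant polynomials to non-constant ones and (in the degenerate $d=0$ case) nonzero constants to nonzero constants, so the empty/nonempty status of the root set is preserved and the convention $\lambda(\phi\equiv\text{const})=0$ is respected throughout.
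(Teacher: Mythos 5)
Your proposal is correct and follows essentially the same route as the paper, which also constructs the bijection by multiplying by the nonzero scalar $i/j$ (you simply go in the opposite direction with $c=j\,i^{-1}$); your write-up just spells out the well-definedness, bijectivity, and root-preservation checks that the paper leaves implicit.
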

\begin{proof}
The map can be constructed by multiplying a scalar, $i/j \mod q$, to the second set, since $ij\neq0$ and $i/j\in GF(q)$.
\end{proof}
We remark that the two polynomials not only have the same number of roots but also the same values. According to the definition of $\Lambda(\cdot)$, Eq.~\eqref{Design:Define:Lambda}, it is simple to see the following lemma.
\begin{lemma} \label{Lemma:Design:LambdaNdj}
The value of $\Lambda(\mathcal{N}_{d,j})$is the same for all $j\neq0\in GF(q)$.
\end{lemma}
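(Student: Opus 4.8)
The plan is to deduce the lemma directly from the Proposition just established, so that no new machinery is required. Fix two nonzero field elements $i,j\in GF(q)$. The Proposition supplies a one-to-one map between $\mathcal{N}_{d,j}$ and $\mathcal{N}_{d,i}$, namely multiplication by the scalar $i/j$, under which paired polynomials share the same roots. First I would confirm that this map really is a bijection onto $\mathcal{N}_{d,i}$: every element of $\mathcal{N}_{d,j}$ has the form $jx^d+\phi$ with $\phi\in\mathcal{M}_{q^{d}}$, and scaling by $i/j$ produces $ix^d+(i/j)\phi$; closure of $\mathcal{M}_{q^{d}}$ under scalar multiplication guarantees $(i/j)\phi\in\mathcal{M}_{q^{d}}$, so the image lies in $\mathcal{N}_{d,i}$, and the inverse is scaling by $j/i$.

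Next I would note that multiplying a polynomial by a nonzero scalar neither creates nor destroys roots, so $\lambda$ is invariant along the map, i.e.\ $\lambda\bigl((i/j)\psi\bigr)=\lambda(\psi)$ for every $\psi\in\mathcal{N}_{d,j}$. No special handling of the constant-polynomial convention of Eq.~\eqref{Design:Define:phiconstfun} is needed, since every member of $\mathcal{N}_{d,j}$ with $j\neq0$ has degree exactly $d$ and is therefore non-constant. With the bijection and the $\lambda$-invariance in hand, I would reindex the defining sum of $\Lambda$ through the map to obtain
\begin{equation}
\Lambda(\mathcal{N}_{d,j})=\sum_{\psi\in\mathcal{N}_{d,j}}2^{\lambda(\psi)}=\sum_{\psi\in\mathcal{N}_{d,j}}2^{\lambda((i/j)\psi)}=\sum_{\chi\in\mathcal{N}_{d,i}}2^{\lambda(\chi)}=\Lambda(\mathcal{N}_{d,i}).
\end{equation}
Because $i$ and $j$ were arbitrary nonzero elements, this establishes that $\Lambda(\mathcal{N}_{d,j})$ does not depend on the choice of $j\neq0$, which is exactly the claim.

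I do not expect any genuine obstacle: the content is a term-by-term relabeling of the summation index induced by the Proposition's map. The only point deserving a moment's care is verifying that this map is onto $\mathcal{N}_{d,i}$ and preserves roots, and both facts follow immediately from the closure of $\mathcal{M}_{q^{d}}$ under scalar multiplication together with the invariance of the root set under nonzero scaling. Everything beyond that is bookkeeping in the definition of $\Lambda(\cdot)$.
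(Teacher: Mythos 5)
Your argument is correct and is exactly the route the paper takes: the paper simply remarks that the lemma follows from the Proposition's scalar-multiplication bijection together with the definition of $\Lambda(\cdot)$, and your proposal fills in that same reindexing argument in detail. The only nitpick is that your claim that every member of $\mathcal{N}_{d,j}$ is non-constant assumes $d\ge1$; for $d=0$ the elements are nonzero constants, which still have $\lambda=0$, so the conclusion is unaffected.
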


For the case where $j=0$, we have the following lemma.
\begin{lemma} \label{Lemma:Design:SameMd10}
For every positive integer $d$,
\begin{equation} \label{Design:SameMd10}
\begin{aligned}
\Lambda(\mathcal{N}_{d,0})\le\Lambda(\mathcal{N}_{d,1}).
\end{aligned}
\end{equation}
\end{lemma}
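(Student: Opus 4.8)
The plan is to evaluate both sides of Eq.~\eqref{Design:SameMd10} \emph{exactly} by a double-counting (Fubini) argument, rewriting each weight $2^{\lambda(\phi)}$ as a count of subsets of the root set of $\phi$. For any \emph{nonzero} polynomial $\phi$, I claim $2^{\lambda(\phi)}$ equals the number of subsets $S\subseteq GF(q)$ all of whose elements are roots of $\phi$: when $\phi$ is non-constant this is $2^{|Z(\phi)|}=2^{\lambda(\phi)}$, with $Z(\phi)$ the zero set, and when $\phi$ is a nonzero constant both sides equal $1$. The sole exception is the zero polynomial, where the convention $\lambda(0)=0$ of Eq.~\eqref{Design:Define:phiconstfun} assigns weight $2^0=1$ rather than $2^q$; I would therefore quarantine $\phi=0$ from the outset and re-add its contribution by hand at the end.

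Granting this identity, I would swap the order of summation in $\Lambda(\cdot)=\sum_\phi 2^{\lambda(\phi)}$ so that, for each subset $S\subseteq GF(q)$ with $|S|=s$, I count the number of nonzero polynomials in each family vanishing on all of $S$. The two families are the degree-$\le d-1$ polynomials $\mathcal{N}_{d,0}=\mathcal{M}_{q^d}$, which form a $d$-dimensional $GF(q)$-vector space, and the monic degree-$d$ polynomials $\mathcal{N}_{d,1}=\{x^d+\psi:\deg\psi\le d-1\}$, an affine coset of that same space. The key computational input is Lagrange interpolation and the Vandermonde determinant: for $s\le d$ the $s$ evaluation functionals at distinct points are independent on a $d$-dimensional space, so the homogeneous vanishing conditions for $\mathcal{N}_{d,0}$ and the inhomogeneous conditions $\psi(a)=-a^d$ ($a\in S$) for $\mathcal{N}_{d,1}$ each have $q^{d-s}$ solutions; for $s>d$, a nonzero polynomial of degree $\le d-1$ and likewise a monic polynomial of degree $d$ cannot possess $s$ distinct roots, so both counts vanish.

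Carrying this out yields, for each $S$ with $s\le d$, exactly $q^{d-s}-1$ nonzero members of $\mathcal{N}_{d,0}$ and exactly $q^{d-s}$ members of $\mathcal{N}_{d,1}$ vanishing on $S$, a difference of $1$ per such $S$ and $0$ when $s>d$. Summing over all subsets gives a raw difference of $\sum_{s=0}^{d}\binom{q}{s}$ between the nonzero-polynomial weights of the two families; subtracting the weight $2^{\lambda(0)}=1$ that the zero polynomial contributes to $\Lambda(\mathcal{N}_{d,0})$ converts the $s=0$ term away and leaves the exact identity
\[
\Lambda(\mathcal{N}_{d,1})-\Lambda(\mathcal{N}_{d,0})=\sum_{s=1}^{d}\binom{q}{s},
\]
whose right-hand side is strictly positive for every positive integer $d$ — stronger than the claimed inequality.

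The main obstacle is bookkeeping around the convention $\lambda(\text{const})=0$: the subset-counting identity is clean for all nonzero polynomials but fails precisely at $\phi=0\in\mathcal{N}_{d,0}$, so the argument must isolate that one term and restore its weight $1$ at the end, or risk an off-by-$2^q$ error. A secondary point to check carefully is the boundary case $s=d$, where the interpolation count $q^{d-s}=1$ must reconcile with the fact that the \emph{unique} monic degree-$d$ polynomial vanishing on a $d$-element set $S$ is $\prod_{a\in S}(x-a)$, while \emph{no} nonzero polynomial of degree $\le d-1$ vanishes on such an $S$; getting this endpoint right is exactly what makes the term-by-term comparison valid across the whole range of $s$.
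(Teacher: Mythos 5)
Your proof is correct, and it takes a genuinely different route from the paper's. The paper decomposes $\mathcal{N}_{d,0}=\mathcal{M}_{q^d}$ into the degree shells $\mathcal{N}_{k,j}$ ($k<d$, $j\neq 0$) plus the zero polynomial, reduces each shell to $\Lambda(\mathcal{N}_{k,1})$ via the scaling symmetry of Lemma \ref{Lemma:Design:LambdaNdj}, and then bounds the sum term by term using the closed form $\Lambda(\mathcal{N}_{k,1})=|\mathcal{N}_{k,1}|\sum_{i=0}^{k}q^{-i}\binom{q}{i}$ imported as a black box from Leont'ev's Lemma 4. You instead double-count pairs $(\phi,S)$ with $S$ a subset of the zero set of $\phi$, and use Lagrange interpolation/Vandermonde to count, for each $s$-element $S$, the members of each family vanishing on $S$ ($q^{d-s}-1$ nonzero members of $\mathcal{N}_{d,0}$ versus $q^{d-s}$ members of $\mathcal{N}_{d,1}$ for $s\le d$, both zero for $s>d$). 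This buys two things: the argument is self-contained --- applied to $\mathcal{N}_{d,1}$ alone it reproves Eq.~\eqref{Design:LeontevLemma4} as $\Lambda(\mathcal{N}_{d,1})=\sum_{s=0}^{d}\binom{q}{s}q^{d-s}$ --- and it yields the exact difference $\Lambda(\mathcal{N}_{d,1})-\Lambda(\mathcal{N}_{d,0})=\sum_{s=1}^{d}\binom{q}{s}$, which is strictly stronger than the stated inequality (one can check it against $q=3$, $d=2$: $21-15=6=\binom{3}{1}+\binom{3}{2}$). Your handling of the two delicate points --- quarantining $\phi=0$ because of the convention \eqref{Design:Define:phiconstfun}, and reconciling the $s=d$ endpoint where the interpolation count $1$ corresponds to the unique monic polynomial $\prod_{a\in S}(x-a)$ while no nonzero polynomial of degree at most $d-1$ survives --- is exactly right.
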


\begin{proof}
From Lemma 4 of ref.~\cite{Leontev:Roots:2006}, we know that
\begin{equation} \label{Design:LeontevLemma4}
\begin{aligned}
\Lambda(\mathcal{N}_{d,1}) = |\mathcal{N}_{d,1}|\sum_{i=0}^{d}q^{-i}\binom{q}{i}.
\end{aligned}
\end{equation}
%Then, we can get
With Eq.~\eqref{Design:Ndj},
\begin{equation} \label{Design:Prove0le1}
\begin{aligned}
\Lambda(\mathcal{N}_{d,0}) &= \sum_{k=0}^{d-1}(q-1)\Lambda(\mathcal{N}_{k,1})+1 \\
%&= \sum_{k=0}^{d-1}(q-1)|\mathcal{N}_{k,1}|\sum_{i=0}^{k}q^{-i}\binom{q}{i} \\
&\le \left(\sum_{k=0}^{d-1}(q-1)|\mathcal{N}_{k,1}|+1\right)\sum_{i=0}^{d}q^{-i}\binom{q}{i} \\
%&= |\mathcal{N}_{d,1}|\sum_{i=0}^{d}q^{-i}\binom{q}{i} \\
&= \Lambda(\mathcal{N}_{d,1}) \\
\end{aligned}
\end{equation}
%where the first and fourth equality comes from Eq.~\eqref{Design:PM0i} and the second equality comes from Eq.~\eqref{Design:LeontevLemma4}.
%where the inequality comes from Lemma \ref{Lemma:Design:Increasingd}.
\end{proof}

\begin{lemma} \label{Lemma:Design:calrho}
For all $i \in[q^q]$,
\begin{equation} \label{Design:Modified:Lambda}
\begin{aligned}
\sum_{j\le i}2^{\lambda(\phi_i-\phi_j)} =\sum_{k=k^*}^d a_k\Lambda(\mathcal{N}_{k,1})-\Lambda (\mathcal{N}_{k^*,1})+\Lambda(\mathcal{N}_{k^*,0}),
\end{aligned}
\end{equation}
where
\begin{equation} \label{Design:Modified:VarDef}
\begin{aligned}
d &= \lfloor\log{(i+1)}/\log{q}\rfloor, \\
a_k &= \lfloor (i+1)/q^k\rfloor \mod q, \\
k^* &= \min_{k}\{a_k\neq0\}. \\
\end{aligned}
\end{equation}
%$d=\lceil\log{i}/\log{q}-1\rceil$, $a_k=\lfloor i/q^k\rfloor \mod q$, and $k^*=\min_{k}\{a_k\neq0\}$.
%$\phi_i(x)=a_dx^d+a_{d-1}q^{d-1}+\dots+a_0$ and $i+1=a_d \times q^d+a_{d-1}\times q^{d-1}+...+a_0$ ,$a_{k^*}$ is the last nonzero factor
\end{lemma}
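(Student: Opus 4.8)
The plan is to rewrite the left side as a single $\Lambda$-value and then evaluate it through a base-$q$ digit decomposition of the summation range. Write $(j)_k=\lfloor j/q^k\rfloor\bmod q$ for the $k$-th base-$q$ digit, so that $\phi_j=\sum_k (j)_k x^k$ and $a_k=(i+1)_k$. Since $j\mapsto\phi_j$ is injective, the differences $\phi_i-\phi_j$ for $0\le j\le i$ are distinct, and by Eq.~\eqref{Design:Define:Lambda}
\begin{equation}
\sum_{j\le i}2^{\lambda(\phi_i-\phi_j)}=\Lambda(D_i),\qquad D_i=\{\phi_i-\phi_j:0\le j\le i\}.
\end{equation}
Setting $m=i+1$, I would split $\{j:0\le j\le i\}=\{j:j<m\}$ into the standard digit blocks $B_t$, one for each $t$ with $a_t\ge1$ (these lie between $k^*$ and $d$): $j\in B_t$ means $(j)_l=a_l$ for $l>t$, $(j)_t\in\{0,\dots,a_t-1\}$, and $(j)_l$ free for $l<t$. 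These blocks disjointly cover $\{j<m\}$, since $j<m$ exactly when $j$ first drops below $m$ at some digit position $t$ with $a_t\ge1$.

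Next I would read off the difference polynomials block by block using $\phi_i-\phi_j=\sum_k((i)_k-(j)_k)x^k$ with subtraction in $GF(q)$. Adding one to $i$ forces the carry pattern $(i)_l=a_l$ for $l>k^*$, $(i)_{k^*}=a_{k^*}-1$, and $(i)_l=q-1$ for $l<k^*$. Within a block $B_t$ the coefficients of $x^l$ for $l>t$ vanish; the coefficient of $x^t$ takes $a_t$ distinct values as $(j)_t$ runs over $\{0,\dots,a_t-1\}$ (subtraction by $(i)_t$ being a bijection of $GF(q)$); and since the lower digits of $j$ are free, the coefficients of $x^l$ for $l<t$ independently range over all of $GF(q)$. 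Hence $\{\phi_i-\phi_j:j\in B_t\}$ is a disjoint union of sets $\mathcal{N}_{t,b}$.

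The contribution then splits according to whether $t>k^*$ or $t=k^*$. For $t>k^*$ one has $(i)_t=a_t$, so the coefficient of $x^t$ runs over the $a_t$ distinct \emph{nonzero} values $a_t-(j)_t$; the block is $\bigsqcup_b\mathcal{N}_{t,b}$ over these $a_t$ nonzero $b$, contributing $a_t\Lambda(\mathcal{N}_{t,1})$ by Lemma~\ref{Lemma:Design:LambdaNdj}. For $t=k^*$ one has $(i)_{k^*}=a_{k^*}-1$, so the coefficient of $x^{k^*}$ attains $0$ exactly once (at $(j)_{k^*}=a_{k^*}-1$, i.e.\ the diagonal term $j=i$) together with $a_{k^*}-1$ distinct nonzero values, contributing $\Lambda(\mathcal{N}_{k^*,0})+(a_{k^*}-1)\Lambda(\mathcal{N}_{k^*,1})$. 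Summing over $t$ from $k^*$ to $d$ and rewriting $(a_{k^*}-1)\Lambda(\mathcal{N}_{k^*,1})=a_{k^*}\Lambda(\mathcal{N}_{k^*,1})-\Lambda(\mathcal{N}_{k^*,1})$ yields exactly $\sum_{k=k^*}^d a_k\Lambda(\mathcal{N}_{k,1})-\Lambda(\mathcal{N}_{k^*,1})+\Lambda(\mathcal{N}_{k^*,0})$.

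The main obstacle is the digit bookkeeping rather than any deep idea: one must track the carry relating the digits of $i$ to those of $i+1$, verify that the $GF(q)$-differences of digit values are distinct and pin down exactly when one of them is zero, and confirm that in each block the low-order coefficients remain free independently of the $x^t$ coefficient so that complete $\mathcal{N}_{t,b}$ blocks (and hence Lemma~\ref{Lemma:Design:LambdaNdj}) apply. The $t=k^*$ block is the delicate one: it is the unique block containing the diagonal term $j=i$, and it is precisely the appearance there of the single zero coefficient that produces the correction $-\Lambda(\mathcal{N}_{k^*,1})+\Lambda(\mathcal{N}_{k^*,0})$.
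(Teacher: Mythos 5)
Your proposal is correct and follows essentially the same route as the paper's proof: the same base-$q$ digit-block partition of $\{\phi_0,\dots,\phi_i\}$ (your $B_t$, refined by the value of the $t$-th digit of $j$, is exactly the paper's $\mathcal{A}_{t,c_t}$), the same identification of each sub-block's difference set with a complete $\mathcal{N}_{t,b}$ so that Lemma~\ref{Lemma:Design:LambdaNdj} applies, and the same special treatment of the $t=k^*$ sub-block containing $j=i$, which yields the $-\Lambda(\mathcal{N}_{k^*,1})+\Lambda(\mathcal{N}_{k^*,0})$ correction. If anything, you are more explicit than the paper about the carry pattern relating the digits of $i$ to the digits $a_k$ of $i+1$, which the paper's write-up glosses over.
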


\begin{proof}
Divide the summation on the left hand side of Eq.~\eqref{Design:Modified:Lambda} into blocks, according to Eq.~\eqref{Design:Modified:VarDef},
\begin{equation} \label{Design:Modified:i1expand}
\begin{aligned}
i+1 = \sum_{k=0}^{d}a_kq^k. \\
\end{aligned}
\end{equation}
Before we prove the Lemma, let us take a look at the first block. According to the definition of $d$, we know that $a_d\neq0$, thus
\begin{equation} \label{Design:Modified:rho1st}
\begin{aligned}
\sum_{j=0}^{q^d-1}2^{\lambda(\phi_i-\phi_j)} &= \sum_{\phi_j\in \mathcal{N}_{d,0}}2^{\lambda(\phi_i-\phi_j)} \\
&=\sum_{\phi_{j'}\in \mathcal{N}_{d,a_d}}2^{\lambda(\phi_{j'})} \\
&=\Lambda (\mathcal{N}_{d,1}), \\
\end{aligned}
\end{equation}
where the two equalities comes from the definition of $\mathcal{N}_{d,j}$, Eq.~\eqref{Design:Define:Ndj}, and the last equality is derived from Eq.~\eqref{Design:Define:Lambda} and Lemma \ref{Lemma:Design:LambdaNdj}. 

With the construction of Eq.~\eqref{Design:PolyChoose} and the expansion of Eq.~\eqref{Design:Modified:i1expand}, we know that the $x^{d}$ coefficient of $\phi_i$ is $a_d$, and that of $\phi_j$ is in $[a_d]$ for every $j\in[a_dq^d]$. Then, according to Lemma \ref{Lemma:Design:LambdaNdj} and the calculation of Eq.~\eqref{Design:Modified:rho1st}, one can see that
\begin{equation} \label{Design:Modified:rho1storder}
\begin{aligned}
\sum_{j=0}^{a_dq^d-1}2^{\lambda(\phi_i-\phi_j)} &=a_d\Lambda (\mathcal{N}_{d,1}), \\
\end{aligned}
\end{equation}
which is contribution from the first term of Eq.~\eqref{Design:Modified:i1expand}.

Following the derivation of Eq.~\eqref{Design:Modified:rho1storder}, we now consider the general term in Eq.~\eqref{Design:Modified:i1expand}. For every $a_k>0$, $k^* \le k\le d$ and $0 \le c_k< a_k$, define a set
\begin{equation} \label{Design:Modified:blocks_def}
\begin{aligned}
\mathcal{A}_{k,c_k}=\{a_d x^d +\dots+a_{k+1}x^{k+1}+c_k x^k+\phi(x) | \phi(x)\in \mathcal N_{k,0}\}.
\end{aligned}
\end{equation}
It is not hard to see that the polynomial sets, $\mathcal{A}_{k,c_k}$, are disjoint for different values of $k$ and $c_k$, and the $\mathcal{M}_{i+1}$ defined in Eq.~\eqref{Design:Define:DefMn} can be partitioned by
\begin{equation} \label{Design:Modified:DisjointA}
\begin{aligned}
\mathcal{M}_{i+1} = \bigcup_{k=k^*}^{d}\bigcup_{c_k=0}^{a_k-1}\mathcal{A}_{k,c_k},
\end{aligned}
\end{equation}
where we use the fact that $\mathcal{A}_{k,c_k}=\varnothing$ when $a_k=0$.

%for every $k^* \le k\le d$ and $0 \le c_k< a_k$.
%with
%\begin{equation} \label{Design:Modified:Akcpara}
%\begin{aligned}
%k^* &\le k\le d \\
%0 &\le c< a_k.
%\end{aligned}
%\end{equation}
For the last partition, where $k=k^*$ and $c_{k^*}=a_{k^*}-1$, one can see that
\begin{equation} \label{Design:Modified:blocks_compute2}
\begin{aligned}
\sum_{\phi_j\in \mathcal{A}_{k^*,c_{k^*}}} 2^{\lambda(\phi_i-\phi_j)} &=\sum_{\phi_{j'}\in \mathcal N_{k,0}}2^{\lambda(\phi_{j'})}
&=\Lambda (\mathcal N_{k,0}),\\
\end{aligned}
\end{equation}
For any other partitions,
\begin{equation} \label{Design:Modified:blocks_compute}
\begin{aligned}
\sum_{\phi_j\in \mathcal{A}_{k,c_k}}2^{\lambda(\phi_i-\phi_j)}
&=\sum_{\phi_{j'}\in \mathcal N_{k,1}}2^{\lambda(\phi_{j'})}
&=\Lambda (\mathcal N_{k,1})\\
\end{aligned}
\end{equation}
where the first equalities in  Eq.~\eqref{Design:Modified:blocks_compute2} and Eq.~\eqref{Design:Modified:blocks_compute} come from the fact that the coefficients of the highest $d-k$ orders in $\phi_i$ are the same as the ones in every polynomial $\phi_j$ in $\mathcal{A}_{k^*,c_{k^*}}$ or $\mathcal{A}_{k,c_{k}}$. Now with Eq.~\eqref{Design:Modified:DisjointA}, \eqref{Design:Modified:blocks_compute2}, and \eqref{Design:Modified:blocks_compute}, we can evaluate the left hand side of Eq.~\eqref{Design:Modified:Lambda},
\begin{equation} \label{Design:Modified:blocks_sum}
\begin{aligned}
\sum_{j=0}^{i}2^{\lambda(\phi_i- \phi_j)}&=\sum_{\phi_j\in M_i}2^{\lambda(\phi_i- \phi_j)}\\
&=\sum_{\phi_j \in\bigcup \mathcal{A}_{k,c_k}}2^{\lambda(\phi_i- \phi_j)}\\
&=\sum_{\phi_j\in \mathcal{A}_{k^*,a_{k^*}-1}}2^{\lambda(\phi_i- \phi_j)}+\sum_{\phi_j\in \bigcup \mathcal{A}_{k,c_k}/ \mathcal{A}_{k^*,a_{k^*}-1}}2^{\lambda(\phi_i- \phi_j)}\\
&=\Lambda(\mathcal N_{k_*,0})+\sum_{(k,c_k)\ne(k^*,a_{k^*}-1 )}\Lambda(\mathcal N_{k,1}) \\
&=\sum_{k=k^*}^d a_k\Lambda(N_{k,1})-\Lambda (N_{k^*,1})+\Lambda(N_{k^*,0}). \\
\end{aligned}
\end{equation}
\end{proof}

\subsection{Main result}
\begin{theorem} \label{Theorem:Design:bounde}
For a prime power number $q$ and every positive integer $n\le q^q$, there exists an explicit weak $(n,q,l,\rho)$-design with $l=q^2$ and $\rho<(1+q^{-1})^{q}<e$.
\end{theorem}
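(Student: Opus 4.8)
\emph{Proof proposal.} The plan is to read off $\rho$ directly from its definition in Eq.~\eqref{Design:WeakDefi}: the design produced by Eq.~\eqref{Design:PolyChoose} already inherits $|S_i|=q$ and $l=q^2$ from the Nisan-Wigderson construction and is explicit, so it suffices to bound the smallest constant for which Eq.~\eqref{Design:WeakDefi} holds for every $i\in[n]$, namely
\[
\rho=\max_{i\in[n]}\frac{1}{i+1}\sum_{j<i}2^{|S_i\cap S_j|},
\]
and to show $\rho<(1+q^{-1})^{q}$. The starting point is Lemma \ref{Lemma:Design:calrho}, which evaluates $\sum_{j\le i}2^{\lambda(\phi_i-\phi_j)}$ in closed form. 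First I would pass from $\sum_{j\le i}$ to $\sum_{j<i}$ by removing the diagonal term $j=i$, which contributes $2^{\lambda(0)}=1$ by Eq.~\eqref{Design:Define:phiconstfun}; this term sits inside the $\Lambda(\mathcal{N}_{k^*,0})$ block via the trivial polynomial, so $\sum_{j<i}2^{|S_i\cap S_j|}$ equals the right-hand side of Eq.~\eqref{Design:Modified:Lambda} minus one.

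Next I would discard the lower-order correction. By Lemma \ref{Lemma:Design:SameMd10} we have $\Lambda(\mathcal{N}_{k^*,0})\le\Lambda(\mathcal{N}_{k^*,1})$, so the combination $-\Lambda(\mathcal{N}_{k^*,1})+\Lambda(\mathcal{N}_{k^*,0})-1$ is strictly negative, giving
\[
\sum_{j<i}2^{|S_i\cap S_j|}<\sum_{k=k^*}^{d}a_k\,\Lambda(\mathcal{N}_{k,1}).
\]
The strictness here, coming from the $-1$, is exactly what upgrades the final bound to a strict inequality. I would then substitute the explicit value from Eq.~\eqref{Design:LeontevLemma4} together with $|\mathcal{N}_{k,1}|=q^{k}$ from Eq.~\eqref{Design:Ndj}, writing $\Lambda(\mathcal{N}_{k,1})=q^{k}c_k$ with $c_k\triangleq\sum_{m=0}^{k}q^{-m}\binom{q}{m}$.

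The key algebraic observation is that the denominator is exactly $i+1=\sum_{k=k^*}^{d}a_k q^{k}$ by the expansion Eq.~\eqref{Design:Modified:i1expand} (the digits $a_k$ vanish below $k^*$). Hence the per-$i$ ratio is bounded by a convex combination of the $c_k$,
\[
\frac{1}{i+1}\sum_{j<i}2^{|S_i\cap S_j|}<\frac{\sum_{k=k^*}^{d}a_k q^{k}c_k}{\sum_{k=k^*}^{d}a_k q^{k}},
\]
with nonnegative weights $a_k q^{k}$ summing to one after normalization and at least the top weight $a_d q^{d}$ positive. Since $c_k$ is non-decreasing in $k$ (each increment $c_{k+1}-c_k=q^{-(k+1)}\binom{q}{k+1}\ge0$ for $k+1\le q$), the weighted average is at most its largest entry $c_d$. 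Because $n\le q^{q}$ forces $d=\lfloor\log(i+1)/\log q\rfloor\le q$, monotonicity then gives $c_d\le c_q=\sum_{m=0}^{q}q^{-m}\binom{q}{m}=(1+q^{-1})^{q}$ by the binomial theorem.

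Putting these together, every per-$i$ ratio is strictly below $(1+q^{-1})^{q}$, so the maximum over the finitely many $i\in[n]$ is as well, yielding $\rho<(1+q^{-1})^{q}$; the final step $(1+q^{-1})^{q}<e$ is the classical fact that $(1+1/q)^{q}$ increases to its limit $e$. I expect the main obstacle to be the bookkeeping in the third paragraph: one must verify that the denominator $i+1$ coincides \emph{exactly} with $\sum_{k}a_k q^{k}$ and that the numerator, after discarding the negative correction, is the matching weighted sum $\sum_{k}a_k q^{k}c_k$, so that the ratio is genuinely a convex combination rather than merely bounded termwise. Once that identification is secured, the monotonicity of $c_k$ and the binomial identity $c_q=(1+q^{-1})^{q}$ close the argument with room to spare.
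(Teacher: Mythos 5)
Your proposal is correct and follows essentially the same route as the paper's own proof: it invokes Lemma \ref{Lemma:Design:calrho} and Lemma \ref{Lemma:Design:SameMd10} to reduce the per-$i$ ratio to the weighted average $\sum_k a_k\Lambda(\mathcal{N}_{k,1})/\sum_k a_k q^k$, bounds it by the largest term via Eq.~\eqref{Design:LeontevLemma4}, and closes with the binomial identity. The only cosmetic difference is that the paper caps the top index at $d\le q-1$ (gaining the extra $-q^{-q}$ slack) while you cap at $q$ and draw strictness solely from the $-1$ diagonal correction; both are sound.
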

\begin{proof}
We prove this theorem by showing that the design constructed by Eq.~\eqref{Design:PolyChoose} is a weak $(n,q,l,\rho)$-design with  $\rho<(1+q^{-1})^{q}$.

From the definition of Eq.~\eqref{Design:WeakDefi} and \eqref{Design:Modified:suminter}, one can see that
\begin{equation} \label{Design:Modified:calrho1}
\begin{aligned}
\rho &= \frac{\sum_{j<i}2^{|S_i\cap S_j|}}{i+1} \\
&= \frac{\sum_{j<i}2^{\lambda(\phi_i-\phi_j)}}{i+1}. \\
\end{aligned}
\end{equation}
We then apply Lemma \ref{Lemma:Design:calrho}, Eq.~\eqref{Design:Modified:Lambda} and \eqref{Design:Modified:i1expand} to evaluate $\rho$,
\begin{equation} \label{Design:Modified:calrho2}
\begin{aligned}
\rho &= \frac{ \sum_{k=0}^d a_k\Lambda(N_{k,1})-\Lambda (N_{k^*,1})+\Lambda(N_{k^*,0})-1}{a_d \times q^d+a_{d-1}\times q^{d-1}+...+a_0},
\end{aligned}
\end{equation}
where the factor $-1$ in the numerator comes from the definition of $\lambda$, Eq.~\eqref{Design:Define:phiconstfun}, regarding the term $2^{\lambda(\phi_i-\phi_i)}=1$. Then, according to Lemma \ref{Lemma:Design:SameMd10},
\begin{equation} \label{Design:Modified:calrho2}
\begin{aligned}
\rho &< \frac{ \sum_{k=0}^d a_k\Lambda(N_{k,1})}{a_d \times q^d+a_{d-1}\times q^{d-1}+...+a_0} \\
&\le \max_{k} \frac{\Lambda(N_{k,1})}{q^k} \\
\end{aligned}
\end{equation}
From Eq.~\eqref{Design:Ndj} and \eqref{Design:LeontevLemma4}, one can show that
\begin{equation} \label{Design:Modified:Proverhoe}
\begin{aligned}
\rho &\le \sum_{j=0}^{d}q^{-j}\binom{q}{j} \\
&\le (1+q^{-1})^q-q^{-q} \\
&< (1+q^{-1})^q, \\
\end{aligned}
\end{equation}
where the second inequality holds when $d=q-1$.

\end{proof}

\section{Design construction} \label{Section:Design:Construction}
In Theorem \ref{Theorem:Design:bounde}, we show that the design constructed by Eq.~\eqref{Design:PolyChoose} can be bounded $\rho<e$. On the other hand, it not hard to see that $\rho>2$ for the refined Nisan-Wigderson design (as constructed by Eq.~\eqref{Design:PolyChoose}) in a reasonable regime of $n$ and $q$, e.g., $q\ge16$ and $n>q^2$. Thus, our bound in Theorem \ref{Theorem:Design:bounde} is relatively tight.

In the application of extractors, such as \cite{Raz:Extractor:2002}, the value of $\rho$ roughly indicates the ratio of randomness that can be extracted. Thus, we need to achieve a $\rho$ that is close to 1. Then, we have to go beyond the Nisan-Wigderson design. In order to reduce the parameter $\rho$, one can extend the size of the set, from $[l]$ to $[l']$. Raz et al.~proposed a block design idea to reduce $\rho$ \cite{Raz:Extractor:2002,Hartman:Design:2003}.  The basic idea is break the set $[l']$ into $b$ blocks (smaller sets), each of which has a size of $l$ (hence, $l'=lb$). That is, the $i$th subset is $\{il+1,il+2,\dots,(i+1)l\}$ and $i\in[b]$. The design sets are subsets of one of subsets. Obviously, the sets from different subsets are disjoint. Hartman and Raz show that with this technique (Lemma 17 of ref.~\cite{Raz:Extractor:2002}), $\rho$ can be reduced to 1 exponentially fast with the number of subsets grows. With this technique, we can reduce $\rho$ down to 1 with a finite number, $O(\rho\log(n\rho))$, of blocks by digging into details of the design constructed by Eq.~\eqref{Design:PolyChoose}.

\begin{corollary} \label{Corollary:Design:BlockDesigns}
Given the explicit weak $(n,q,l,\rho)$-design constructed by Eq.~\eqref{Design:PolyChoose} with $l=q^2$ and $1<\rho<e$, there exists an explicit weak $(n',q,l',1)$-design with $n'=n\rho$, $l'=q^2b$ and
\begin{equation} \label{Design:Block:numBlocks}
\begin{aligned}
b &= \left\lfloor\frac{\log n+\log \rho-\log q}{\log\rho-\log(\rho-1)}\right\rfloor \\
&= O(\log n) \\
\end{aligned}
\end{equation}
as the number of blocks.
\end{corollary}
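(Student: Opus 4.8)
The plan is to realize the $\rho=1$ design as a disjoint union of $b$ copies of $[l]=[q^2]$ and to populate these blocks with (copies of) the polynomial sets of Eq.~\eqref{Design:PolyChoose}, filling one block at a time. Writing $[l']=[q^2b]$ as $b$ blocks, each carrying its own copy of the construction of Eq.~\eqref{Design:NWcon}, any two design sets lying in different blocks are automatically disjoint and contribute only the baseline $2^0=1$ to the weak-design sum. Hence the only overlaps that matter are those inside a single block, and verifying Definition \ref{Def:Ext:WeakDesign} with $\rho=1$ (right-hand side $n'$) reduces to bounding, for every set, the sum of $2^{|S_i\cap S_j|}$ taken over its \emph{earlier block-mates} only. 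Note that the output is a standard weak design (right-hand side $n'\cdot1=n'$) while the input obeys the stronger inequality Eq.~\eqref{Design:WeakDefi}; it is exactly this extra slack that the block trick of \cite{Raz:Extractor:2002} converts into a reduction of $\rho$.

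Concretely, I would take the first block to be the entire refined design of Theorem \ref{Theorem:Design:bounde} ($m_1=n$ sets), and let block $\beta$ hold the prefix $\{\phi_0,\dots,\phi_{m_\beta-1}\}$ of Eq.~\eqref{Design:PolyChoose} in a fresh copy of $[l]$, ordering the combined family block by block. For the $p$-th set of block $\beta$ (global index $i=\sum_{\gamma<\beta}m_\gamma+p$) the earlier block-mates are exactly $\phi_0,\dots,\phi_{p-1}$, so Eq.~\eqref{Design:WeakDefi} applied \emph{inside} the block gives
\begin{equation}
\sum_{j<i}2^{|S_i\cap S_j|}=\sum_{\gamma<\beta}m_\gamma+\sum_{t<p}2^{\lambda(\phi_p-\phi_t)}\le \sum_{\gamma<\beta}m_\gamma+(p+1)\rho .
\end{equation}
Requiring this to stay $\le n'$ for the worst case $p=m_\beta-1$ yields the single clean condition $m_\beta\rho\le S_\beta$, where $S_\beta=\sum_{\gamma\ge\beta}m_\gamma$ is the number of sets not yet placed. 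Since $m_\beta=S_\beta-S_{\beta+1}$, this is equivalent to $S_{\beta+1}\ge\frac{\rho-1}{\rho}S_\beta$, so choosing the blocks as large as allowed makes the residual count shrink by the factor $\frac{\rho-1}{\rho}$ at each step, i.e.\ $S_\beta\approx n'\big(\frac{\rho-1}{\rho}\big)^{\beta-1}$. This geometric decay is the source of the logarithmic block count and of the denominator $\log\rho-\log(\rho-1)=\log\frac{\rho}{\rho-1}$ in Eq.~\eqref{Design:Block:numBlocks}.

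The main obstacle is termination: the estimate $m_\beta\rho\le S_\beta$ can never be met by the final block, since there $S_b=m_b$ forces $\rho\le1$, so the generic bound stalls just short of $\rho=1$. Here one must ``dig into the details'' of Eq.~\eqref{Design:PolyChoose}: as soon as the residual drops to $S_\beta\le q$, the relevant prefix $\{\phi_0,\dots,\phi_{S_\beta-1}\}$ consists only of constant polynomials, whose pairwise differences are nonzero constants with $\lambda=0$ by Eq.~\eqref{Design:Define:phiconstfun}; hence these sets are pairwise disjoint, the last block contributes only the baseline, and its weak-design sum never exceeds $n'-1<n'$. This is exactly why the recursion is run only until $S_\beta\le q$, and why the threshold $n'/q$ — equivalently the numerator $\log n+\log\rho-\log q$ — appears in Eq.~\eqref{Design:Block:numBlocks}.

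Finally I would assemble the pieces: iterate the recurrence from $S_1=n'=n\rho$ until the residual first falls to $\le q$, which occurs after $b=\big\lfloor\frac{\log(n'/q)}{\log(\rho/(\rho-1))}\big\rfloor$ blocks, absorb the tail in one disjoint (constant-polynomial) block, and check that every internal prefix satisfies the condition for all $p$, not just the extremal one (immediate, since $(p+1)\rho\le m_\beta\rho\le S_\beta$). The bound $b=O(\log n)$ then follows because the denominator $\log\frac{\rho}{\rho-1}$ is a constant, and the only loose ends are the integrality of $n\rho$ and of the block sizes, handled by taking floors. I expect the disjointness of the low-degree (constant) prefix to be the crux that upgrades the asymptotic ``$\rho\to1$'' of the Raz--Reingold--Vadhan block method into the exact equality $\rho=1$ with an explicit, finite $b$.
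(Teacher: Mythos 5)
Your proposal follows essentially the same route as the paper's proof: the same geometric block sizes (the paper's $n_i=(1-\rho^{-1})^i n$ are exactly your $m_\beta=S_\beta-S_{\beta+1}$ with $S_1=n\rho$), the same use of the per-index bound Eq.~\eqref{Design:WeakDefi} to get $\sum_{i'<i}n_{i'}+\rho n_i=n\rho$, and the same termination via the final block of at most $q$ pairwise-disjoint constant-polynomial sets. If anything, you spell out explicitly (why the generic bound must fail on the last block and why the constant prefix rescues it) what the paper only states in passing.
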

\begin{proof}
%This is a corollary of Theorem \ref{Theorem:Design:bounde}.
Denote the number of subsets from $i$th subset to be $n_i$. We construct the design in such a way that
\begin{equation} \label{Design:Block:blocksizes}
\begin{aligned}
n_i &= (1-\rho^{-1})^i n \\
n_b &= n\rho-\sum_{i=0}^{b-1}n(1-\rho^{-1})^i \\
&= n\rho(1-\rho^{-1})^b \\
\end{aligned}
\end{equation}
where the first equation holds for $i\in[b]$. It is not hard to verify that $\sum_{i=0}^b n_i=n\rho$ and $n_b\le q$ with Eq.~\eqref{Design:Block:numBlocks}.
%Thus, we can use disjoint sets, $\mathcal{M}_q$, for the last block, which has a $\rho=1$ according to Eq.~\eqref{Design:0orderMq}.
Now, we can verify the conditions in Definition \ref{Def:Ext:WeakDesign}. Condition 1 is obviously satisfied. For a set $S_j$ in block $i\in[b]$,
\begin{equation} \label{Design:Block:rhoblocks}
\begin{aligned}
\sum_{j'<j}2^{|S_j\cap S_{j'}|} &\le \sum_{i'=0}^{i-1} n_{i'}+\rho n_i = n\rho, \\
%&<e j.
\end{aligned}
\end{equation}
since there is no intersection between any $j'$th set and the set from $i$th block. For the last block, $\sum_{j'<j}2^{|S_j\cap S_{j'}|}=j$. Thus, it is a weak $(n',q,l',1+1/n')$-design. Since $\lfloor(1+1/n')(n'-1)\rfloor=n'-1$, it is also a weak $(n',q,l',1)$-design.
\end{proof}

If we use the matrix representation of designs as shown in Eq.~\eqref{Ext:design:44example}, then the new design matrix from a refined Nisan-Wigderson design matrix $A_0$ can be written as
\begin{equation} \label{Design:Block:matrix}
\begin{aligned}
\left(
  \begin{array}{cccccc}
    A_0 &  \\
     & A_1  \\
     &  & A_2 &  \\
     &  &  & \cdots \\
     &  &  &  & A_{b-1}\\
     &  &  &  &  & A_b\\
  \end{array}
\right),
\end{aligned}
\end{equation}
where all the off-diagonal blocks are 0. According to the block design idea, presented in Corollary \ref{Corollary:Design:BlockDesigns}, $A_i$ take first $n_i$ rows of $A_{i-1}$ for $i=\{1,2,\dots,b\}$, where $n_i$ is defined in Eq.~\eqref{Design:Block:blocksizes}.

\section{Discussions} \label{Section:Design:Conclusion}
In Nisan-Wigderson construction, $n$ is limited by $q^q$, which is not necessarily true for a general case. Let us extend the example of Eq.~\eqref{Ext:design:44example},
\begin{equation} \label{Design:46example}
\begin{aligned}
\left(
  \begin{array}{cccc}
    1 & 0 & 1 & 0 \\
    0 & 1 & 0 & 1 \\
    1 & 0 & 0 & 1 \\
    0 & 1 & 1 & 0 \\
    1 & 1 & 0 & 0 \\
    0 & 0 & 1 & 1 \\
  \end{array}
\right).
\end{aligned}
\end{equation}
One can easily verify that this design has a $\rho<2$ and $n=6>q^q=4$. The key point is that one does not need to pick only one element from one block, as used in Eq.~\eqref{Design:NWcon}. In general, one might expect $n=O(\binom{l}{q})$ or $l=O(\log n)$. If one can find such a design with a reasonable $\rho$, one can apply the block design idea as shown in Eq.~\eqref{Design:Block:numBlocks} so that the seed length for the Trevisan extractor is $O(\log^2n)$ .

%%%%%%%%%%%%%%%%%%%%%%%%%%%%%%%%%%%%%%%%%%%%%%%%%%%%%%%%%%%%%%%%%%%
% Acknowledgments
\subsection*{Acknowledgments}
We thank H.-K.~Lo, B.~Qi, C.~Rockoff, F.~Xu, and H.~Xu for enlightening discussions. Financial supports from the National Basic Research Program of China Grants No.~2011CBA00300 and No.~2011CBA00301, National Natural Science Foundation of China Grants No.~61073174, No.~61033001, No.~61061130540, and No.~61003258, the 1000 Youth Fellowship program in China, CFI, CIPI, the CRC program, CIFAR, MITACS, NSERC, OIT, QuantumWorks, and Special Funds for Work Safety of Guangdong Province of 2010 from Administration of Work Safety of Guangdong Province of China are gratefully acknowledged. X.~Q.~Tan especially thanks H.-K.~Lo for the hospitality during her stay at the University of Toronto.

%%%%%%%%%%%%%%%%%%%%%%%%%%%%%%%%%%%%%%%%
% choose a style
%\bibliographystyle{ieeetr}
%\bibliographystyle{unsrt}
\bibliographystyle{apsrev4-1}
%%%%%%%%%%%%%%%%%%%%%%%%%%%%%%%%%%%%%%%%

%%%%%%%%%%%%%%%%%%%%%%%%%%%%%%%%%%%%%%%%
% choose a .bib file
\bibliography{Bibli}
%%%%%%%%%%%%%%%%%%%%%%%%%%%%%%%%%%%%%%%%

%\nocite{*}

\end{document}